\theoremstyle{plain}
\newtheorem{thm}{Theorem}[section]
\newtheorem{cor}[thm]{Corollary}
\newtheorem{lem}[thm]{Lemma}
\theoremstyle{definition}
\newtheorem{defn}[thm]{Definition}
\theoremstyle{plain}
\theoremstyle{problem}
\newtheorem{prob}{Problem}
\theoremstyle{plain}
\newtheorem{conj}{Conjecture}
\theoremstyle{plain}
\theoremstyle{plain}
\theoremstyle{plain}
\begin{document}
\begin{CJK}{GBK}{song}
\newcommand{\song}{\CJKfamily{song}}    
\newcommand{\fs}{\CJKfamily{fs}}        
\newcommand{\kai}{\CJKfamily{kai}}      
\newcommand{\hei}{\CJKfamily{hei}}      
\newcommand{\li}{\CJKfamily{li}}        
\renewcommand\figurename{Fig.}

\begin{center}
{{\huge On the minimum constant resistance curvature conjecture of graphs}} \\[18pt]
{\Large Wensheng Sun$^{1}$, Yujun Yang$^{2}$, \footnotetext{*Corresponding author at E-mail address: shjxu@lzu.edu.cn } Shou-Jun Xu$^{1}$* }\\[6pt]
{ \footnotesize  $^{1}$ School of Mathematics and Statistics, Gansu Center for Applied Mathematics, Lanzhou University, Lanzhou, Gansu 730000 China\\
$^{2}$ School of Mathematics and Information Science,Yantai University, Yantai 264005 China}
\end{center}
\vspace{1mm}
\begin{abstract}

Let $G$ be a connected graph with $n$ vertices. The resistance distance $\Omega_{G}(i,j)$ between any two vertices $i$ and $j$ of $G$ is defined as the effective resistance between them in the electrical network constructed from $G$ by replacing each edge with a unit resistor. The resistance matrix of $G$, denoted by $R_G$, is an $n \times n$ matrix whose $(i,j)$-entry is equal to $\Omega_{G}(i,j)$. The resistance curvature $\kappa_i$ in the vertex $i$ is defined as the $i$-th component of the vector $(R_G)^{-1}\mathbf{1}$, where $\mathbf{1}$ denotes the all-one vector. If all the curvatures in the vertices of $G$ are equal, then we say that $G$ has constant resistance curvature.  Recently, Devriendt, Ottolini and Steinerberger \cite{kde} conjectured that the cycle $C_n$ is extremal in the sense that its curvature is minimum among graphs with constant resistance curvature. In this paper, we confirm the conjecture. As a byproduct, we also solve an open problem proposed by Xu, Liu, Yang and Das \cite{kxu} in 2016. Our proof mainly relies on the characterization of maximum value of the sum of resistance distances from a given vertex to all the other vertices in 2-connected graphs.

\noindent {\bf Keywords:} resistance distance; resistance curvature; 2-connected graph; Rayleigh's monotonicity law \\
\vspace{1mm}
\noindent {\bf AMS Classification: } 05C12, 05C35, 91A80.
\end{abstract}

\section{Introduction}

Three decades ago, inspired by electrical network theory, Klein and Randi\'{c} \cite{djk1} proposed a novel distance function called resistance distance. Let $G= (V(G),E(G))$ be a connected graph with $n$ vertices. The \emph{resistance distance} between two vertices $i$ and $j$ of $G$, denoted by $\Omega_{G}(i,j)$, is defined to be the potential difference generated  between $i$ and $j$ induced by the unique $i \rightarrow j$ flow when the unit current flows in from node $i$ and flows out from node $j$. The \emph{resistance matrix}  $R_G$ of $G$ is an $n \times n$ matrix  such that its $(i,j)$-entry equal to $\Omega_{G}(i,j)$. For a vertex $u \in V(G)$, the \emph{resistive eccentricity index} of $u$, denoted by $\Omega_{G}(u)$, is defined as the sum of the resistance distances between $u$ and all the other vertices of $G$, that is:
$$\Omega_{G}(u) =\sum\limits_{\ v \in V(G)}\Omega_{G}(u,v).$$
A connected graph $G$ is called \emph{resistance-regular} \cite{jzh} if the  resistive eccentricity index of each vertex in $G$ is equal. The \emph{Kirchhoff index} of $G$ \cite{djk1}, denoted by $Kf(G)$, is defined as the sum of the resistance distances between all pairs of vertices, i.e.,
\begin{equation}\label{eq1.1}
Kf(G)=\sum\limits_{\{u,v\}\subseteq V(G)}\Omega_{G}(u,v) = \dfrac{1}{2}\sum\limits_{u\in V(G)}\Omega_{G}(u).
\end{equation}
As intrinsic metrics of graphs and classic components of circuit theory, resistance distance and Kirchhoff index have been widely studied, the readers are referred to recent papers \cite{jg1,jhu1,wsu,sxu1,sss,wko} and references therein for more details.

Curvature is a fundamental and important concept in differential geometry, geometric analysis and probability theory. Starting with the work of Bakry-\'{E}mery \cite{dba}, various discrete notions of curvature have been defined and exploited to understand the geometric properties of graphs, such as Bakry-\'{E}mery curvature \cite{yli,pho,hli}, Lin-Lu-Yau curvature \cite{yli1,qhu}, Ollivier-Ricci curvature \cite{yol,dpb}, Steinerberger curvature \cite{sst}, etc. This is a very active field of research, see \cite{frk,yol,kde,qhu} for more detail and references therein.  It is worth mentioning that in 2024, Devriendt, Ottolini and Steinerberger \cite{kde}  introduced the novel graph curvature via resistance matrix,  and characterized a large number of desirable and interesting properties in terms of resistance curvature, such as diameter, spectral gap and Kirchhoff index. In this paper, we devote ourselves to the resistance curvature on graphs. The definition of resistance curvature as follows.

\begin{defn}
Let $G$ be a connected graph with $n$ vertices.  The  resistance curvature $\kappa_i \in \mathbb{R}$ in the vertex $v_i \in V(G)$ is defined by requiring the vector $\kappa=(\kappa_1, \kappa_2, \ldots, \kappa_n)^{T} \in \mathbb{R}^{n} $ to solve a system of linear equations
$$R_G\kappa=\mathbf{1},$$
where $R_G$ is the resistance matrix and  $\mathbf{1} \in \mathbb{R}^{n}$ is the $n$-vector containing all 1's.
\end{defn}

It is well known that $R_G$ is non-singular \cite{rbb,jzh}, so the solution $\kappa$ is unique. In fact, the notion of resistance curvature has a number of desirable properties, for example, connected graphs with nonnegative resistance curvatures are 1-tough \cite{kde,kde1}. In particular, if the resistance curvature of every vertex in the graph $G$ is the same constant, then we say that the graph $G$ has constant resistance curvature, and denoted this constant as $\mathcal{K}_G$. It is not hard to see that  a graph $G$ has constant resistance curvature if and only if $G$ is resistance-regular. For a graph $G$ with constant resistance curvature and for every vertex $u \in V(G)$, the following formula holds
\begin{equation}\label{eq1.2}
\mathcal{K}_G=\frac{1}{\Omega_G(u)}=\frac{n}{2Kf(G)}.
\end{equation}
In their original work, Devriendt, Ottolini and Steinerberger \cite{kde} also determined the constant resistance curvature on some common vertex-transitive graphs, such as the complete graph $K_n$, the cycle $C_n$, the hypercube $Q_n$ and the $d$-dimensional discrete tori $C_{n,d}$. In addition, they showed that if $G$ is a graph with constant resistance curvature, then
$$\mathcal{K}_G \geq \frac{1}{n(n-1)}.$$
However, this lower bound seems too be somewhat rough and it is not sharp. Obviously, for $n=2$, then $\mathcal{K}_{K_2}=1$. For $n \geq 3$, they further proposed the following conjecture (see page 7 in \cite{kde}).
\begin{conj}\cite{kde}\label{conj1}
Let $G$ be a connected graph with constant resistance curvature on $n \geq 3$ vertices. Then
$$\mathcal{K}_G\geq \frac{6}{n^2-1},$$
with equality holding if and only if  $G=C_n.$
\end{conj}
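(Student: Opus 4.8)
The plan is to translate Conjecture~\ref{conj1} into an extremal problem about the resistive eccentricity index, reduce that problem to $2$-connected graphs, and finally establish a sharp upper bound for $\Omega_G(u)$ on $2$-connected graphs. Since a graph has constant resistance curvature exactly when it is resistance-regular, formula \eqref{eq1.2} gives $\mathcal{K}_G=1/\Omega_G(u)$ for the common value $\Omega_G(u)$. Using $\Omega_{C_n}(i,j)=d(i,j)\bigl(n-d(i,j)\bigr)/n$ one computes $\Omega_{C_n}(u)=(n^2-1)/6$, hence $\mathcal{K}_{C_n}=6/(n^2-1)$. Therefore the conjecture is \emph{equivalent} to the assertion that every resistance-regular graph on $n\ge 3$ vertices satisfies $\Omega_G(u)\le (n^2-1)/6$, with equality only for $C_n$. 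This reformulation is what lets us bring resistance-distance machinery to bear.

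Next I would show that a resistance-regular graph on $n\ge 3$ vertices has no cut vertex, hence is $2$-connected. Suppose $v$ is a cut vertex, so $G=G_1\cup G_2$ with $V(G_1)\cap V(G_2)=\{v\}$ and $n_i=|V(G_i)|\ge 2$. Writing $\Omega_{G_i}(a)=\sum_{w\in V(G_i)}\Omega_G(a,w)$ and using that resistance distances add through $v$, for every $a\in V(G_1)\setminus\{v\}$ one has
\[
\Omega_G(a)=\Omega_{G_1}(a)+(n_2-1)\,\Omega_G(a,v)+\Omega_{G_2}(v),\qquad \Omega_G(v)=\Omega_{G_1}(v)+\Omega_{G_2}(v).
\]
Resistance-regularity then forces $\Omega_{G_1}(v)-\Omega_{G_1}(a)=(n_2-1)\,\Omega_G(a,v)>0$ for all such $a$; summing over $a\in V(G_1)\setminus\{v\}$ and using $\sum_a\Omega_G(a,v)=\Omega_{G_1}(v)$ yields $2Kf(G_1)=(n_1-n_2+1)\,\Omega_{G_1}(v)$, and by symmetry $2Kf(G_2)=(n_2-n_1+1)\,\Omega_{G_2}(v)$. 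Positivity of both sides forces $n_1=n_2$, whence $2Kf(G_1)=\Omega_{G_1}(v)$; but $2Kf(G_1)=\sum_{w\in V(G_1)}\Omega_{G_1}(w)>\Omega_{G_1}(v)$ since $n_1\ge 2$, a contradiction. So $G$ is $2$-connected.

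The crux is the key lemma: \emph{for every $2$-connected graph $G$ on $n$ vertices and every $u\in V(G)$ one has $\Omega_G(u)\le (n^2-1)/6$, with equality if and only if $G=C_n$} (this is precisely the maximum resistive eccentricity problem of \cite{kxu}). The strategy here rests on Rayleigh's monotonicity law: deleting an edge never decreases any resistance distance, so if $G$ has an edge $e$ with $G-e$ still $2$-connected, then $\Omega_G(u)\le\Omega_{G-e}(u)$ and we may pass to $G-e$. This reduces everything to \emph{minimally} $2$-connected graphs, with the cycle $C_n$ as the base case. The main obstacle is that minimally $2$-connected graphs need not be cycles (e.g.\ $K_{2,3}$, where one checks $\max_u\Omega(u)=10/3<4$), so these must be bounded separately. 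For them I would invoke an open ear decomposition: a minimally $2$-connected non-cycle contains a degree-$2$ vertex $w\ne u$ whose two neighbours are non-adjacent; suppressing $w$ by a series reduction produces a smaller $2$-connected graph $G'$, and the series-reduction formula for resistance distance relates $\Omega_G(\cdot)$ to $\Omega_{G'}(\cdot)$, enabling induction on $n$. Arranging this bookkeeping so that it returns \emph{exactly} the cycle bound, with strict inequality whenever $G\ne C_n$, is the hardest part of the argument.

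Finally I would assemble the pieces. Given a resistance-regular $G$ on $n\ge 3$ vertices, the second step makes $G$ $2$-connected, so the key lemma gives $\Omega_G(u)\le (n^2-1)/6$ for the common value $\Omega_G(u)$, whence $\mathcal{K}_G=1/\Omega_G(u)\ge 6/(n^2-1)$. Equality holds precisely when $\Omega_G(u)=(n^2-1)/6$, which by the equality case of the lemma forces $G=C_n$; conversely $C_n$ is vertex-transitive, hence resistance-regular, and attains the bound. This proves Conjecture~\ref{conj1}, and since the key lemma is exactly the characterization of the maximum resistive eccentricity in $2$-connected graphs, it simultaneously settles the open problem of \cite{kxu}.
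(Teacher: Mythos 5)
Your overall architecture is exactly the paper's: reformulate via Eq.~(\ref{eq1.2}) so that the conjecture becomes the assertion that $\Omega_G(u)\le (n^2-1)/6$ for the common resistive eccentricity of a resistance-regular graph, prove that constant resistance curvature forces $2$-connectivity, and then prove the eccentricity bound for all $2$-connected graphs (which simultaneously settles Problem~\ref{conj2}). Your $2$-connectivity step is complete and correct, and is a legitimate variant of the paper's Theorem~\ref{tm3.1}: where the paper sums its identity over $V(G_1)$ under the normalization $n_1\le\sum_{i\ge 2}n_i$ and reaches $\sum_{u}\Omega_{H_1}(u)\le 0$, you derive $2Kf(G_1)=(n_1-n_2+1)\Omega_{G_1}(v)$ and its mirror image, force $n_1=n_2$, and contradict $2Kf(G_1)>\Omega_{G_1}(v)$; both are sound. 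The problem is the crux, your ``key lemma'' (the paper's Theorem~\ref{tm3.2}): you do not prove it, and the inductive strategy you sketch breaks at a concrete step.

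Specifically: suppressing a degree-$2$ vertex $w$ with neighbours $a,b$ is a series reduction that replaces two unit resistors by a single resistor of resistance $2$, so the resulting graph $G'$ on $n-1$ vertices is \emph{weighted}, and your inductive hypothesis, stated for unweighted $2$-connected graphs, does not apply to it. This is not cosmetic: the naive weighted extension of the bound is false. Series reduction preserves all pairwise resistances among the surviving vertices, so suppressing a neighbour $w$ of $u$ in $C_n$ produces a weighted $2$-connected graph on $n-1$ vertices with $\Omega_{G'}(u)=\Omega_{C_n}(u)-\Omega_{C_n}(u,w)=\frac{n^2-1}{6}-\frac{n-1}{n}$, which exceeds the inductive target $\frac{(n-1)^2-1}{6}$ for every $n\ge 3$ (the difference is $\frac{2n-1}{6}-\frac{n-1}{n}>0$). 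So the induction cannot close without first formulating and proving a weighted (suitably normalized) version of the lemma, a substantially harder statement than the one being proved. A second gap is strictness: your reduction to minimally $2$-connected graphs uses Rayleigh's law, which only yields non-strict inequalities, so when $G$ is Hamiltonian with $G\ne C_n$ the spanning minimally $2$-connected subgraph may be $C_n$ itself and you still need the strict inequality $\Omega_G(u)<\frac{n^2-1}{6}$; you flag this (``the hardest part'') but supply nothing. The paper closes both holes without any induction: in the Hamiltonian case it applies the Yang--Klein one-edge-deletion formula (Theorem~\ref{thm2.4}) to a single chord, where the triangle inequality is strict because the chord endpoint $i\ne u$ is not a cut vertex, giving a strict decrease; in the non-Hamiltonian case it takes the longest cycle $C_k$ through $u$, transfers its labeling to $C_n$, and proves the vertexwise strict domination $\Omega_G(u,v)<\Omega_{C_n}(u,v)$, using $\Omega_G(u,v)\le k/4$ for vertices off $C_k$ (via a common cycle of length $m\le k$) against the computation $\Omega_{C_n}(u,v)>k/4$. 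As it stands, your proposal proves the two reduction steps but leaves unproven the theorem that actually carries the paper.
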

A \emph{cut vertex} of a connected graph is a vertex whose deletion results in a disconnected graph. A graph is \emph{2-connected} if it is connected, has no cut vertices, and contains at least three vertices. Consider the family $\mathcal{C}_{n,k}$ of all connected graphs on $n$ vertices which have $k$ cut vertices. In 2016, Xu, Liu, Yang and Das \cite{kxu} characterized graphs in $\mathcal{C}_{n,k}$ with minimal Kirchhoff index for the case that $k\leq \frac{n}{2}$, and Nikseresht \cite{ani} characterized graphs in $\mathcal{C}_{n,k}$ with minimal Kirchhoff index for the case that $k\leq \frac{n}{2}$ and $k\geq n-3$. Recently, Huang, Huang, Liu and He \cite{jhu1} generalized their results and characterized graphs in $\mathcal{C}_{n,k}$ with minimal Kirchhoff index for the case that $0\leq k\leq n-2$. Although graphs in $\mathcal{C}_{n,k}$ with minimal Kirchhoff index have been completely characterized, there are few results on extremal graphs in $\mathcal{C}_{n,k}$ with maximal Kirchhoff index. Even for the case that $k=0$ (i.e., 2-connected graphs), it is still open, even though Xu, Liu, Yang and Das \cite{kxu} guess that the cycle $C_n$ is a natural candidate.
\begin{prob}\cite{kxu} \label{conj2}
Characterize extremal graphs with maximal Kirchhoff index among all 2-connected graphs.
\end{prob}
Although Conjecture \ref{conj1} and Problem \ref{conj2} appear unrelated at first glance, we could establish a close relation between them. In this paper, we are able to show that graphs with constant resistance curvature and $n \geq 3$ vertices must be 2-connected. We then show that the cycle $C_n$ is the unique graph among all 2-connected graphs in which every vertex has the maximum resistive eccentricity index. As a direct consequence, we confirm Conjecture \ref{conj1} and solve Problem \ref{conj2}.

\section{Proof of the main result}
In this section,  we first introduce some basic notations that will be used later.

Let $G= (V(G),E(G))$ be a connected graph, then $|V(G)|$ and $|E(G)|$ are called the order and size of $G$, respectively. We use $d_G(u,v)$ to denote the (shortest path) distance between two vertices $u$ and $v$ of $G$.
For edge $e \in E(G)$ and $u \in V(G)$, we use $G-e$ and $G-u$ to denote the graph obtained from $G$ by deleting edge $e$ and vertex $u$ and all edges incident to $u$, respectively. A \emph{subgraph} of $G$ is a graph $H=(V(H),E(H))$ where $V(H) \subseteq V(G)$ and $E(H) \subseteq E(G)$. For a subgraph $H$, if $V(H)=V(G)$, then $H$ is a \emph{spanning subgraph} of $G$. For vertex $U \subseteq V(G)$, then the subgraph consisting of $U$ and all the edges of $G$ that join two vertices of $U$ is called an \emph{induced subgraph} induced by $U$. Let $C$ be a cycle in $G$. A \emph{chord} of a cycle $C$ is an edge that joins two non-adjacent vertices on this cycle $C$.

Then we  introduce some common tools and principles in electrical network theory. Since a resistor on network can always be viewed as a weighted edge of its corresponding graph, we do not distinguish between electrical networks and the corresponding graphs.

A \emph{block} of $G$ is a maximal connected subgraph of $G$ that has no cut vertex. Obviously, if $G$ is a 2-connected graph, then itself is a block. Suppose that $x$ is a cut vertex and $H_1$ and $H_2$ are subgraphs of $G$. Then we say that $(H_1, H_2)$ is an $x$-\emph{separation} of $G$ if $G = H_1 \cup H_2$ and $V(H_1) \cap V(H_2)=\{x\}$. The following principle can simplify the calculation of resistance distances for graphs with cut vertex.

\textbf{Principle of elimination}\cite{ani1}. Suppose that $G$ is a connected graph and $B$ is a block that contains exactly one cut vertex $x$ in $G$, then the subgraph $H$ obtained from $G$ by deleting all the vertices of $B$ except $x$ satisfies $\Omega_{H}(u, v) = \Omega_{G}(u, v)$ for $u,v\in V(H)$.
\begin{lem}\cite{djk1} \label{lem2.11}
Let $x$ be a cut vertex of graph $G$ and $(H_1, H_2)$ be an $x$-\emph{separation} of $G$. Then for any vertex $u \in V(H_1)$ and $v \in V(H_2)$
\begin{equation*} \Omega_G(u,v)= \Omega_{H_1}(u,x)+ \Omega_{H_2}(x,v).
\end{equation*}
\end{lem}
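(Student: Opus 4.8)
The plan is to prove the additivity by exhibiting the voltage potential on $G$ explicitly as a gluing of the potentials on $H_1$ and $H_2$, and then invoking the fact that the effective resistance equals the potential drop under a unit current, together with uniqueness of the potential up to an additive constant. First I would recall the standard characterization: for a unit current injected at a vertex $s$ and extracted at a vertex $t$ of a connected network, there is a potential function $\phi$, unique up to an additive constant, that satisfies Kirchhoff's current law (is \emph{harmonic}) at every vertex other than $s$ and $t$, has net current $+1$ flowing out of $s$ and $-1$ out of $t$, and satisfies $\Omega(s,t)=\phi(s)-\phi(t)$. This applies verbatim within each $H_i$ as well as within $G$.

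Next I would construct the candidate potential on $G$. Let $\phi_1$ be the potential on $H_1$ driving a unit current from $u$ to $x$, normalized so that $\phi_1(x)=0$; let $\phi_2$ be the potential on $H_2$ driving a unit current from $x$ to $v$, normalized so that $\phi_2(x)=0$. Define $\phi$ on $V(G)=V(H_1)\cup V(H_2)$ by $\phi|_{V(H_1)}=\phi_1$ and $\phi|_{V(H_2)}=\phi_2$. This is well defined precisely because $V(H_1)\cap V(H_2)=\{x\}$ and both pieces assign $x$ the value $0$. The key structural input of the $x$-separation is that every edge of $G$ lies entirely in $H_1$ or entirely in $H_2$, so the neighbours of any vertex, and in particular of $x$, split cleanly between the two sides.

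Then I would verify that $\phi$ is in fact \emph{the} required potential on $G$ for a unit current from $u$ to $v$. For any vertex $w\neq x$ in $V(H_i)\setminus\{u,v\}$, all edges at $w$ lie in $H_i$, so harmonicity at $w$ is inherited directly from $\phi_i$; the current balances $+1$ at $u$ and $-1$ at $v$ are inherited in the same way. The only vertex requiring care is the cut vertex $x$: under $\phi_1$ the net current out of $x$ into $H_1$ equals $-1$ (since $x$ is the sink of $H_1$), while under $\phi_2$ the net current out of $x$ into $H_2$ equals $+1$ (since $x$ is the source of $H_2$); because the edges at $x$ in $G$ are exactly the union of those in $H_1$ and in $H_2$, these contributions sum to $-1+1=0$, so $\phi$ is harmonic at $x$ in $G$. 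Hence $\phi$ is the genuine potential for the unit current from $u$ to $v$ in $G$, and therefore $\Omega_G(u,v)=\phi(u)-\phi(v)=(\phi_1(u)-\phi_1(x))+(\phi_2(x)-\phi_2(v))=\Omega_{H_1}(u,x)+\Omega_{H_2}(x,v)$.

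The main obstacle — really the only subtle point — is the current-balance check at $x$: one must confirm that the current the glued solution pushes out of $H_1$ at the cut vertex is exactly the current it feeds into $H_2$, so that $x$ becomes an ordinary harmonic interior node of $G$ carrying no external source or sink. This is guaranteed by the cut-vertex property, which forces all current from $u$ to $v$ to funnel through $x$ and prevents any current from bypassing it between the two sides; it is the discrete analogue of the \emph{series law} for resistors. Once this balance is verified, the uniqueness of the potential up to an additive constant closes the argument, so no separate computation of the individual resistances is needed.
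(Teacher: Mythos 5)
Your proof is correct, but note that the paper does not prove this lemma at all: it is stated as a known result quoted from Klein and Randi\'{c} \cite{djk1}, so there is no internal proof to compare against. Your gluing-of-potentials argument is the classical proof of this series-type identity, and you correctly isolate and resolve the one delicate step, namely Kirchhoff's current law at the cut vertex $x$, where the net balance $-1$ (as sink of the $H_1$-flow) cancels the net balance $+1$ (as source of the $H_2$-flow) because the edges of $G$ at $x$ are exactly the disjoint union of the edges of $H_1$ and $H_2$ at $x$. One small detail is worth making explicit: you implicitly need each $H_i$ to be connected, both for $\Omega_{H_1}(u,x)$ and $\Omega_{H_2}(x,v)$ to be defined and for the potentials $\phi_1,\phi_2$ to exist and be unique up to an additive constant. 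This follows from the connectivity of $G$ together with an observation you already make: since every edge of $G$ lies entirely in $H_1$ or in $H_2$, no edge joins $V(H_1)\setminus\{x\}$ to $V(H_2)\setminus\{x\}$, so any component of $H_i$ avoiding $x$ would be a component of $G$, a contradiction. With that remark added, your argument is a complete and self-contained proof of the lemma.
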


\textbf{Rayleigh's monotonicity law}\cite{pgd}. In an electrical network, if the edge-resistance increases, then the effective resistance between any pair of nodes will not decrease.

For a connected graph $G$, if we delete an edge (resp., vertex) from $G$, then it means that the resistance on the edge (resp., all edges incident to the vertex) will increase from $1$ to $+ \infty$. Thus, by Rayleigh's monotonicity law, the resistance distance between any two vertices of $G$ will not decrease. Note that a subgraph of a graph $G$ could be obtained from $G$ by iteratively deleting vertices and edges. Therefore, we have the following results.
\begin{lem}\label{lem2.1}
Let $G$ be a graph and $H$ be a subgraph of $G$. Then for $u,v \in V(H)$, we have
\begin{center}$\Omega_{G}(u, v) \leq \Omega_{H}(u, v)$. \end{center}
\end{lem}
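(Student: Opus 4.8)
The plan is to obtain the inequality as an immediate consequence of Rayleigh's monotonicity law, exploiting the interpretation of edge- and vertex-deletion as raising edge resistances to $+\infty$ that was fixed in the paragraph preceding the statement. Since $H$ is a subgraph of $G$, the missing edges $E(G)\setminus E(H)$ together with the missing vertices $V(G)\setminus V(H)$ record exactly what must be removed from $G$ to produce $H$. The key structural observation is that $H$ can be reached from $G$ by a finite sequence of elementary operations, each deleting a single edge or a single vertex (a vertex being removed by first deleting all incident edges and then the resulting isolated vertex).

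First I would fix an enumeration $G=G_0,G_1,\dots,G_m=H$ of intermediate graphs in which each $G_{t+1}$ arises from $G_t$ by one such elementary deletion. Because $u,v\in V(H)$, neither endpoint is ever deleted, so both remain vertices of every $G_t$ and $\Omega_{G_t}(u,v)$ is meaningful at each stage (taking the value $+\infty$ if $u$ and $v$ happen to become disconnected, which only strengthens the desired inequality). I would then apply Rayleigh's monotonicity law to each step: deleting an edge raises its resistance from $1$ to $+\infty$, and deleting a vertex raises the resistance of every incident edge from $1$ to $+\infty$, so in either case no edge-resistance ever decreases. Hence $\Omega_{G_t}(u,v)\leq\Omega_{G_{t+1}}(u,v)$ for every $t$, and chaining these gives $\Omega_G(u,v)=\Omega_{G_0}(u,v)\leq\Omega_{G_m}(u,v)=\Omega_H(u,v)$, as claimed.

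There is no genuine obstacle here; the entire content is carried by Rayleigh's law, which is assumed. The only points requiring a little care are purely bookkeeping: that the two distinguished vertices survive every deletion, which is guaranteed by $u,v\in V(H)$, and that the operation ``delete a vertex'' is legitimately an instance of increasing edge resistances, which is immediate from the conventions established just before the statement. Consequently the proof reduces to stating the reduction to elementary deletions and invoking monotonicity one step at a time.
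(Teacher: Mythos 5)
Your proposal is correct and follows essentially the same route as the paper, which likewise derives the lemma from Rayleigh's monotonicity law by viewing each edge or vertex deletion as raising edge resistances from $1$ to $+\infty$ and noting that any subgraph is reached from $G$ by iterating such deletions. Your added bookkeeping (the explicit chain $G=G_0,\dots,G_m=H$ and the observation that $u,v$ survive every step) only makes the paper's one-paragraph argument more precise.
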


\begin{lem}\label{lem2.2}
Let $G$ be a graph and $H$ be a spanning subgraph of graph $G$. Then for $u \in V(G)$,
\begin{center}$\Omega_{G}(u) \leq \Omega_{H}(u)$. \end{center}
\end{lem}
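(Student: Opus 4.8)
The plan is to reduce the claim to the pairwise inequality already established in Lemma~\ref{lem2.1}. The key observation is that since $H$ is a \emph{spanning} subgraph of $G$, we have $V(H)=V(G)$, so the index set in the sum defining $\Omega_G(u)$ coincides with the index set in the sum defining $\Omega_H(u)$. This is the only place where the spanning hypothesis (as opposed to the arbitrary subgraph hypothesis of Lemma~\ref{lem2.1}) is genuinely used.

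First I would fix a vertex $u \in V(G)=V(H)$. For each $v \in V(G)$, both $u$ and $v$ lie in $V(H)$, so Lemma~\ref{lem2.1} applies and yields the termwise bound $\Omega_G(u,v)\le \Omega_H(u,v)$. Summing this inequality over all $v \in V(G)$ and invoking $V(H)=V(G)$ to rewrite the right-hand sum over $V(H)$ gives
$$\Omega_G(u)=\sum_{v\in V(G)}\Omega_G(u,v)\le \sum_{v\in V(H)}\Omega_H(u,v)=\Omega_H(u),$$
which is exactly the desired conclusion. By the definition of the resistive eccentricity index, each of the outer equalities is immediate, so no further computation is needed.

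There is essentially no hard step here: the argument is a one-line aggregation of the pointwise Rayleigh-type bound. The only point that must be verified carefully is that the spanning hypothesis forces the two summations to range over the same vertex set, which is precisely what permits the termwise comparison to be summed up. Without the spanning assumption the two sums could range over different index sets and the aggregation would fail, so I would emphasize in the write-up that this hypothesis is what makes the reduction to Lemma~\ref{lem2.1} valid.
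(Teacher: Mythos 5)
Your proof is correct and matches the paper's intended argument: the paper states Lemma~\ref{lem2.2} without a separate proof, as the immediate consequence of summing the termwise bound of Lemma~\ref{lem2.1} (itself from Rayleigh's monotonicity law) over $v \in V(G)=V(H)$, exactly as you do. Your explicit remark that the spanning hypothesis is what aligns the two index sets is the right observation and the only point of substance.
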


In \cite{yya}, Yang and Klein obtained a recursion formula for resistance distances on weighted graphs. If only one edge in the graph is deleted, the calculation formula for the resistance distance is as follows.
\begin{thm}\cite{yya}\label{thm2.4}
Let $G$ be a connected graph and edge $e=ij\in E(G)$. Then for vertex pair $p,q\in V(G)$ and $G'=G-e$, we have
 $$\Omega_{G}(p,q)=\Omega_{G'}(p,q)-\frac{[\Omega_{G'}(p,i)+\Omega_{G'}(q,j)-\Omega_{G'}(p,j)-\Omega_{G'}(q,i)]^{2}}{4[1+ \Omega_{G'}(i,j)]}.$$
 \end{thm}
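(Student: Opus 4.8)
The plan is to realize $G$ as the network obtained from $G'=G-e$ by reinserting a single unit resistor between $i$ and $j$, and then to track how the effective resistance between $p$ and $q$ responds to this rank-one change. Writing $L$ for the combinatorial Laplacian of $G'$ and $L^{+}$ for its Moore--Penrose pseudoinverse, recall that $\Omega_{G'}(a,b)=(\mathbf{e}_a-\mathbf{e}_b)^{T}L^{+}(\mathbf{e}_a-\mathbf{e}_b)$, where $\mathbf{e}_a$ is the standard basis vector at $a$. Reinserting the edge replaces $L$ by $L+(\mathbf{e}_i-\mathbf{e}_j)(\mathbf{e}_i-\mathbf{e}_j)^{T}$, a rank-one update, so in principle the identity is a direct consequence of the Sherman--Morrison formula applied to $L^{+}$. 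I would prefer, however, to run the argument through the equivalent and more transparent language of superposition of currents, which matches the electrical viewpoint used throughout the paper.

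Concretely, first I would drive a unit current from $p$ to $q$ in $G$ and let $x$ denote the current flowing across the edge $e$ from $i$ to $j$; since $e$ is a unit resistor, $x$ also equals the potential drop $u_i-u_j$ across it. From the standpoint of the remaining network $G'$, the presence of $e$ acts exactly like an external current of size $x$ withdrawn at $i$ and injected at $j$. Hence the potential vector $u$ in $G$ coincides, up to an additive constant, with the superposition $\phi-x\psi$, where $\phi$ is the potential in $G'$ for a unit current from $p$ to $q$ and $\psi$ is the potential in $G'$ for a unit current from $i$ to $j$.

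The next step is to pin down $x$ by self-consistency. Evaluating the superposition at $i$ and $j$ gives $x=u_i-u_j=(\phi_i-\phi_j)-x(\psi_i-\psi_j)$. Here $\psi_i-\psi_j=\Omega_{G'}(i,j)$, while the transfer resistance $\phi_i-\phi_j$ is handled by the polarization identity
\[
(\mathbf{e}_i-\mathbf{e}_j)^{T}L^{+}(\mathbf{e}_p-\mathbf{e}_q)=\tfrac12\big[\Omega_{G'}(p,j)+\Omega_{G'}(q,i)-\Omega_{G'}(p,i)-\Omega_{G'}(q,j)\big],
\]
which follows by expanding each resistance distance as a quadratic form in $L^{+}$ and cancelling the diagonal terms. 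Solving the resulting linear equation yields $x=-\tfrac12\,\Delta/(1+\Omega_{G'}(i,j))$, where $\Delta:=\Omega_{G'}(p,i)+\Omega_{G'}(q,j)-\Omega_{G'}(p,j)-\Omega_{G'}(q,i)$. Finally, reading off $\Omega_G(p,q)=u_p-u_q=(\phi_p-\phi_q)-x(\psi_p-\psi_q)$, using $\phi_p-\phi_q=\Omega_{G'}(p,q)$ together with reciprocity $\psi_p-\psi_q=\phi_i-\phi_j=-\tfrac12\Delta$, and substituting the value of $x$ collapses everything to the claimed formula, with the squared numerator $\Delta^{2}$ over the denominator $4(1+\Omega_{G'}(i,j))$.

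The main obstacle is the bookkeeping around the pseudoinverse: because $L$ is singular with kernel spanned by $\mathbf{1}$, the superposition $u=\phi-x\psi$ is only determined up to an additive constant, so every quantity that matters must be a difference of potentials, and one must check that reinserting $e$ leaves the kernel of the Laplacian unchanged, which holds precisely because $G$ (and hence $G'$, when $e$ is not a bridge) is connected. The one genuinely delicate case is when $e$ is a bridge: then $G'$ is disconnected and several resistance distances become infinite, so the identity must be interpreted through a limiting argument or simply excluded; in the $2$-connected setting that drives the rest of the paper this case never arises. Once the polarization identity and this constant-shift invariance are in place, the remainder is the routine algebra sketched above.
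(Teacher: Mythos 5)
Your proof is correct, but there is nothing in the paper to compare it against: Theorem \ref{thm2.4} is quoted without proof from Yang and Klein \cite{yya}, so the comparison is really with that cited source, where the identity is obtained by the generalized-inverse manipulation of the Laplacian that you yourself flag at the outset (the rank-one update $L_G=L_{G'}+(\mathbf{e}_i-\mathbf{e}_j)(\mathbf{e}_i-\mathbf{e}_j)^{T}$ handled Sherman--Morrison style on the orthogonal complement of $\mathbf{1}$). Your superposition rendering of that computation is sound in every detail: the restriction of the unit $p\to q$ current in $G$ to the edges of $G'$ is a $G'$-flow with injection vector $(\mathbf{e}_p-\mathbf{e}_q)-x(\mathbf{e}_i-\mathbf{e}_j)$, so by uniqueness of $G'$-potentials up to an additive constant $u=\phi-x\psi+c$ (it would be worth stating this uniqueness appeal explicitly, since the sentence ``the presence of $e$ acts exactly like an external current'' gestures at it without naming it); the self-consistency equation $x=(\phi_i-\phi_j)-x\,\Omega_{G'}(i,j)$ is uniquely solvable because $1+\Omega_{G'}(i,j)>0$; your polarization identity and the symmetry of $L^{+}$ give $\phi_i-\phi_j=\psi_p-\psi_q=-\Delta/2$; and substitution produces exactly $\Omega_{G'}(p,q)-\Delta^{2}/\bigl[4(1+\Omega_{G'}(i,j))\bigr]$, with no distinctness assumptions on $p,q,i,j$ needed. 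You have also caught a genuine imprecision in the statement as reproduced in the paper: the formula requires $G'=G-e$ to be connected (i.e., $e$ is not a bridge), otherwise the right-hand side involves infinite resistances; this hypothesis is explicit in \cite{yya}, and in the only use made of the theorem here (Case 1 of Theorem \ref{tm3.2}, where the deleted edge is a chord of a Hamiltonian cycle) it holds automatically, exactly as you observe. In short: a correct, self-contained electrical-network proof of an imported result, arguably more transparent than the algebraic original, with the bridge caveat handled properly.
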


Now, we use combinatorial and electrical network techniques to prove that all graphs with constant resistance curvature and $n \geq 3$ vertices must be 2-connected graphs.
\begin{thm}\label{tm3.1}
Let $G$ be a connected graph with constant resistance curvature on $n \geq 3$ vertices. Then $G$ is a 2-connected graph.
\end{thm}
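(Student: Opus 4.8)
The plan is to pass to the equivalent formulation: as observed in the Introduction, $G$ has constant resistance curvature if and only if $G$ is resistance-regular, i.e. $\Omega_G(u)$ takes the same value at every vertex $u$. Hence it suffices to prove that a resistance-regular graph on $n\geq 3$ vertices cannot have a cut vertex. I would argue by contradiction: assume $x$ is a cut vertex and let $(H_1,H_2)$ be an $x$-separation, writing $n_i=|V(H_i)|$, so that $n_1,n_2\geq 2$ and $n=n_1+n_2-1$. The goal is to turn regularity into a numerical relation between $n_1$ and $n_2$ that is self-contradictory.

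The first step is to express the resistive eccentricity of a vertex across the cut. For $u,w\in V(H_1)$ the principle of elimination (applied repeatedly to peel $H_2$ off $x$ one block at a time, from the end-blocks of $H_2$ inward) gives $\Omega_G(u,w)=\Omega_{H_1}(u,w)$, while for $w\in V(H_2)\setminus\{x\}$ Lemma \ref{lem2.11} gives $\Omega_G(u,w)=\Omega_{H_1}(u,x)+\Omega_{H_2}(x,w)$. Summing over all $w$ yields, for every $u\in V(H_1)$,
$$\Omega_G(u)=\Omega_{H_1}(u)+(n_2-1)\,\Omega_{H_1}(u,x)+\Omega_{H_2}(x),$$
and in particular (taking $u=x$) $\Omega_G(x)=\Omega_{H_1}(x)+\Omega_{H_2}(x)$.

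The second step imposes regularity. Equating $\Omega_G(u)=\Omega_G(x)$ cancels the common summand $\Omega_{H_2}(x)$ and leaves, for all $u\in V(H_1)$,
$$\Omega_{H_1}(u)+(n_2-1)\,\Omega_{H_1}(u,x)=\Omega_{H_1}(x).$$
I would then sum this identity over $u\in V(H_1)$, using $\sum_{u\in V(H_1)}\Omega_{H_1}(u)=2Kf(H_1)$ and $\sum_{u\in V(H_1)}\Omega_{H_1}(u,x)=\Omega_{H_1}(x)$, to obtain the clean relation
$$2Kf(H_1)=(n_1-n_2+1)\,\Omega_{H_1}(x).$$
Since regularity equally forces $\Omega_G(v)=\Omega_G(x)$ for $v\in V(H_2)$, the identical computation with the roles of $H_1$ and $H_2$ exchanged gives $2Kf(H_2)=(n_2-n_1+1)\,\Omega_{H_2}(x)$.

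The contradiction is now immediate: because $n_1\geq 2$, the quantity $2Kf(H_1)=\sum_{u\in V(H_1)}\Omega_{H_1}(u)$ strictly exceeds its single term $\Omega_{H_1}(x)$, so $(n_1-n_2+1)\,\Omega_{H_1}(x)>\Omega_{H_1}(x)$ and hence $n_1>n_2$; the symmetric relation forces $n_2>n_1$, which is absurd. Therefore $G$ has no cut vertex, and being connected on $n\geq 3$ vertices it is 2-connected. The step I expect to be the main obstacle is the bookkeeping in paragraph two, namely carefully separating the within-$H_1$ resistances (unchanged, via elimination) from the across-the-cut resistances (additive, via Lemma \ref{lem2.11}), together with the justification that the elimination principle—stated for a single end-block—may be iterated to reduce all of $H_2$ to the point $x$; everything afterwards is a short symmetric computation and a strict-positivity inequality.
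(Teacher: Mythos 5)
Your proof is correct and takes essentially the same route as the paper: decompose $\Omega_G(u)$ across the cut vertex via the elimination principle and Lemma \ref{lem2.11}, equate with $\Omega_G(x)$, and sum the resulting identity over one side of the separation to reach a sign contradiction. The only cosmetic differences are that the paper keeps all $k$ components of $G-v$ separate (your $n_2-1$ is its $\sum_{i\geq 2} n_i$) and closes with a WLOG size assumption yielding $\sum_{u}\Omega_{H_1}(u)\leq 0$, whereas you run the computation symmetrically on both sides to obtain the contradictory pair $n_1>n_2$ and $n_2>n_1$.
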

\begin{proof}
\begin{figure*}[ht]
  \setlength{\abovecaptionskip}{0cm} %
  \centering
 $$\includegraphics[width=2in]{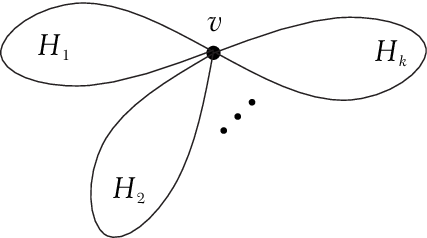}$$\\
 \caption{The graph $G$ in the proof of Theorem \ref{tm3.1}.}    \label{Fig.1}
\end{figure*}
Suppose to the contrary that there exists cut vertex $v \in V(G)$, and $G-v$ has  $G_{1}, G_{2},...,G_{k}$ connected components, where $k\geq 2$.  Let $|V(G_i)| = n_i$, and the subgraph induced by vertex set $V(G_i) \cup \{v\}$ of $G$ is $H_i$,  where $1 \leq i \leq k$. See Fig. \ref{Fig.1}. Without loss of generality, suppose that $n_1 \leq \sum\limits_{i=2}^kn_i.$  Since $V(G)=V(H_1) \cup V(G_2)\cup \cdots \cup V(G_k)$, for each vertex $u \in V(G_1)$, by the principle of elimination and Lemma \ref{lem2.11}, we have
\begin{align}\label{eq3.1}
\Omega_{G}(u)&=\sum_{w \in V(G)}\Omega_{G}(u,w) \notag \\
&=\sum_{w \in V(H_1)}\Omega_{G}(u,w)+\sum_{i=2}^{k}\sum_{w \in V(G_i)}\Omega_{G}(u,w)\notag \\
&=\sum_{w \in V(H_1)}\Omega_{H_1}(u,w)+\sum_{i=2}^{k}\sum_{w \in V(G_i)}\Big(\Omega_{H_1}(u,v)+\Omega_{H_i}(v,w)\Big)\notag \\
&=\Omega_{H_1}(u)+\sum_{i=2}^kn_i \Big(\Omega_{H_1}(u,v)\Big)+\sum_{i=2}^{k}\Omega_{H_i}(v).
\end{align}
It is obvious that
\begin{equation}\label{eq3.2}
\Omega_{G}(v)=\sum_{i=1}^{k}\Omega_{H_i}(v).
\end{equation}
Since $G$ has the constant resistance curvature, we know $\Omega_{G}(u)=\Omega_{G}(v)$. Thus, comparing Eqs. (\ref{eq3.1}) and (\ref{eq3.2}), we get
\begin{equation}\label{eq3.3}
\Omega_{H_1}(u)+\sum_{i=2}^kn_i \Big(\Omega_{H_1}(u,v)\Big)=\Omega_{H_1}(v).
\end{equation}
By summing up both sides of Eq. (\ref{eq3.3}) for all the vertices of $V(G_1)$, we get
\begin{equation}\label{eq3.4}
\sum_{u \in V(G_1)}\Omega_{H_1}(u)+\sum_{u \in V(G_1)}\sum_{i=2}^kn_i \Big(\Omega_{H_1}(u,v)\Big)=\sum_{u \in V(G_1)}\Omega_{H_1}(v).
\end{equation}
Note that
\begin{equation}\label{eq3.5}
\sum_{u \in V(G_1)}\sum_{i=2}^kn_i \Big(\Omega_{H_1}(u,v)\Big)=\sum_{i=2}^kn_i \Big(\Omega_{H_1}(v) \Big).
\end{equation}
Substituting Eq. (\ref{eq3.5}) into Eq. (\ref{eq3.4}) yields
\begin{align}
\sum_{u \in V(G_1)}\Omega_{H_1}(u)&=n_1\Omega_{H_1}(v)-\sum_{i=2}^kn_i \Big(\Omega_{H_1}(v) \Big) \notag \\
&=\left(n_1-\sum_{i=2}^kn_i \right)\Big(\Omega_{H_1}(v) \Big) \notag \\
&\leq 0, \notag
\end{align}
which contradicts to the fact that $\sum_{u \in V(G_1)}\Omega_{H_1}(u)>0$ since $H_1$ has at least two vertices. Thus we conclude that $G$ is a 2-connected.
\end{proof}

\begin{thm}\label{tm3.2}
Let $G$ be a 2-connected graph with $n \geq 3$ vertices. Then for every vertex $u \in V(G)$ we have
$$\Omega_{G}(u) \leq \frac{n^2-1}{6}. $$
Moreover, the bound is achieved if and only if  $G=C_n.$
\end{thm}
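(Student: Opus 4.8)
The plan is to prove $\Omega_G(u)\le \frac{n^2-1}{6}$ by a two–layer induction: first reduce to minimally $2$-connected graphs using Rayleigh's law, then peel off an ear to descend in the number of vertices. I would begin by recording the equality case through a direct computation. On $C_n$ the two arcs of lengths $d$ and $n-d$ joining two vertices act as parallel resistors, so $\Omega_{C_n}(i,j)=\frac{d(n-d)}{n}$ with $d=d_{C_n}(i,j)$, whence $\Omega_{C_n}(u)=\frac1n\sum_{d=1}^{n-1}d(n-d)=\frac1n\cdot\frac{n(n^2-1)}{6}=\frac{n^2-1}{6}$, independent of $u$. This confirms the bound is attained by $C_n$ and pins down the target value.

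Next I would argue by strong induction on $n$, and for fixed $n$ by a secondary induction on the number of edges $m=|E(G)|$. A $2$-connected graph has minimum degree at least $2$, so $m\ge n$, and $m=n$ forces $G$ to be $2$-regular, i.e.\ $G=C_n$; this is the base case and the unique equality instance. If $m>n$ and some edge $e$ has $G-e$ still $2$-connected, then Lemma~\ref{lem2.2} gives $\Omega_G(u)\le\Omega_{G-e}(u)$, and the secondary hypothesis gives $\Omega_{G-e}(u)\le\frac{n^2-1}{6}$; strictness for $G\ne C_n$ follows from the induction when $G-e\ne C_n$, and when $G-e=C_n$ directly from Theorem~\ref{thm2.4}, whose one–edge–deletion formula shows the extra chord $e$ strictly lowers at least one resistance distance from $u$. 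This leaves only the case where $G$ is minimally $2$-connected and $G\ne C_n$.

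For such a $G$ I would use an open ear decomposition. Since no single edge is deletable, the last ear $P$ joining $s,t'$ must have an internal vertex, so $G':=G-P^\circ$ (deleting the internal vertices $P^\circ$) is $2$-connected on $n'=n-t$ vertices with $1\le t<n$, and the decomposition can be chosen so that $u\in V(G')$ (the exceptional case $u\in P^\circ$ being handled symmetrically). The ear is a path of $L=t+1$ unit resistors in series, standing in parallel with the effective resistance $r:=\Omega_{G'}(s,t')$ of the remaining network between its endpoints; as each internal vertex $w_a$ (at arc-distance $a$ from $s$) reaches the rest of $G$ only through $s$ and $t'$, series–parallel reduction yields closed forms such as $\Omega_G(w_a,s)=\frac{a(L-a+r)}{L+r}$ and expresses every $\Omega_G(u,w_a)$ explicitly through $a,L,r,\Omega_{G'}(u,s),\Omega_{G'}(u,t')$. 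Splitting $\Omega_G(u)$ and applying Lemma~\ref{lem2.1} to the pairs inside $V(G')$ gives $\Omega_G(u)\le\Omega_{G'}(u)+S$, where $S:=\sum_{a=1}^{L-1}\Omega_G(u,w_a)$, while the primary hypothesis bounds $\Omega_{G'}(u)\le\frac{(n')^2-1}{6}$.

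The crux, which I expect to be the main obstacle, is to show $S\le\frac{t(2n-t)}{6}=\frac{n^2-(n')^2}{6}$, precisely the budget that closes the induction. The closed forms show $S$ is increasing in $r$ (for instance $\partial_r\Omega_G(w_a,s)=\frac{a^2}{(L+r)^2}>0$), so the extremal situation is $r$ as large as possible; bounding $r=\Omega_{G'}(s,t')$ by the largest resistance distance available in a $2$-connected graph on $n'$ vertices, and controlling the location of $u$ relative to the peeled ear, is where the genuine difficulty lies. Strictness for $G\ne C_n$ then requires attention only in the boundary case $G'=C_{n'}$ (so that $G$ is a theta graph), where $\Omega_{G'}(u)$ already attains its bound; there I would check $S<\frac{t(2n-t)}{6}$ strictly from the explicit expressions, thereby showing that equality $\Omega_G(u)=\frac{n^2-1}{6}$ forces $G=C_n$.
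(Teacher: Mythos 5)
Your proposal is a plan, not a proof: its central step is explicitly deferred, and that step is exactly where the theorem lives. The reduction to minimally $2$-connected graphs via Lemma \ref{lem2.2} and Theorem \ref{thm2.4} is sound (and mirrors what the paper does in its Case 1, where a Hamiltonian cycle plus one chord is handled by the one-edge-deletion formula together with the strict triangle inequality). But in the ear-peeling step you only assert the crux inequality $S\le \frac{t(2n-t)}{6}$, conceding that bounding $r=\Omega_{G'}(s,t')$ and controlling the location of $u$ ``is where the genuine difficulty lies.'' Note that this inequality cannot be a routine estimate: your budget is exact, since $\frac{(n')^2-1}{6}+\frac{t(2n-t)}{6}=\frac{n^2-1}{6}$, and it is essentially attained. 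For instance, take $G'=C_4$ (where $\Omega_{G'}(u)$ already equals $\frac{(n')^2-1}{6}$) with a long ear of length $\ell$ attached at antipodal vertices: summing the series--parallel expressions gives $S=\sum_a\Omega_G(u,w_a)\approx\frac{\ell^2-1}{6}+O(\ell)$ against a budget of $\frac{(\ell-1)(\ell+7)}{6}$, so the lower-order terms decide the outcome and no argument for them is given. Two further holes: the claim that the removable ear can always be chosen with $u\in V(G')$ is unproven (and the alternative $u\in P^\circ$ is not ``symmetric'' --- your inductive hypothesis bounds $\Omega_{G'}(u)$ only for $u\in V(G')$, so that case needs a separate computation), and the strictness analysis when $G'=C_{n'}$ is likewise only sketched.

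It is worth contrasting this with the paper's route, which avoids induction altogether and thereby sidesteps the tight-budget problem. The paper fixes $u$, takes a longest cycle $C_k$ through $u$, and compares $G$ with $C_n$ vertex by vertex under a common labeling: for $v$ on $C_k$, Rayleigh monotonicity gives $\Omega_G(u,v)\le\Omega_{C_k}(u,v)=\frac{(k-i)i}{k}<\frac{(n-i)i}{n}=\Omega_{C_n}(u,v)$; for $v$ off $C_k$, $2$-connectedness puts $u$ and $v$ on a common cycle $C_m$ with $m\le k$, so $\Omega_G(u,v)\le\frac{m}{4}\le\frac{k}{4}$, while an explicit series--parallel computation shows every corresponding vertex of $C_n$ satisfies $\Omega_{C_n}(u,v)>\frac{k}{4}$. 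Summing gives strict inequality termwise, with no global bookkeeping needed. If you want to salvage your approach, the missing content is precisely a quantitative lemma of that type --- a uniform pointwise bound on $\Omega_G(u,w)$ for ear-internal $w$ that beats the corresponding $C_n$-distances --- at which point you would have rediscovered the paper's Case 2 inside your induction.
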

\begin{proof}
If $n=3$, then the only 2-connected graph on 3 vertices is the complete graph $K_3$ and the result follows obviously. So we suppose that $n \geq 4$. Since $G$ is 2-connected, $G$ contains a cycle. Let $u$ be an arbitrary vertex of $G$. Let $C_k$ be the longest cycle in $G$ containing $u$. We distinguish the following two cases according to the length of $C_k$.

\textbf{Case 1.} $k=n$. In this case, $G$ contains a Hamiltonian cycle $C_n$. If $G=C_n$, then the desired result follows directly. Otherwise, $G\neq C_n$ and $G$ could be obtained by adding chords to $C_n$.  Let $G'$ be a spanning subgraph of $G$ which is obtained from $C_n$ by adding exactly one chord.  For any vertex $u \in V(G')=V(C_n)$, we know that $\Omega_{C_n}(u)= \frac{n^2-1}{6}$.  Hence, in the following, we show that $\Omega_{G'}(u)< \Omega_{C_n}(u)$, so that by Lemma \ref{lem2.2} we have $\Omega_{G}(u)\leq \Omega_{G'}(u)<\frac{n^2-1}{6}$ as desired.

Let $ij\in E(G')$ be the unique chord of $C_n$. Without loss of generality, suppose that $i \neq u$. Then according to Theorem \ref{thm2.4}, we have
\begin{equation}\label{eq2.6}
\Omega_{G'}(u,i)=\Omega_{C_n}(u,i)-\frac{[\Omega_{C_n}(u,i)+\Omega_{C_n}(i,j)-\Omega_{C_n}(u,j)]^{2}}{4[1+ \Omega_{C_n}(i,j)]}.
\end{equation}
Since $i$ is not a cut vertex of $G'$ that separates $u$ and $j$, it follows by the triangular inequality of the resistance distance that
\begin{equation}\label{eq2.7}
\Omega_{C_n}(u,i)+\Omega_{C_n}(i,j)-\Omega_{C_n}(u,j)>0.
\end{equation}
Combining Eq. (\ref{eq2.6}) and Ineq. (\ref{eq2.7}), we know that
\begin{equation}
\Omega_{G'}(u,i)<\Omega_{C_n}(u,i).
\end{equation}
On the other hand, for any vertex $v \in V(G')\backslash \{u\}$, by Lemma \ref{lem2.1}, we have
\begin{equation*}
 \Omega_{G'}(u, v) \leq  \Omega_{C_n}(u, v).
\end{equation*}
Thus we get
\begin{align}
\Omega_{G'}(u) - \Omega_{C_n}(u)&= \sum_{v \in V(G)}\Omega_{G}(u)- \sum_{v \in V(C_n)}\Omega_{C_n}(u)\notag \\
&=\Big(\Omega_{G'}(u,i)-\Omega_{C_n}(u,i)\Big) + \sum_{v \in V(G)\backslash \{u\}} \Big(\Omega_{G'}(u, v)- \Omega_{C_n}(u, v)\Big) \notag \\
&< 0.\notag
\end{align}
Therefore, by Lemma \ref{lem2.2}, we can conclude
\begin{equation*}
\Omega_{G}(u) \leq \Omega_{G'}(u) < \Omega_{C_n}(u)=\frac{n^2-1}{6}.
\end{equation*}

\begin{figure*}[ht]
  \setlength{\abovecaptionskip}{0cm} %
  \centering
 $$\includegraphics[width=5.0in]{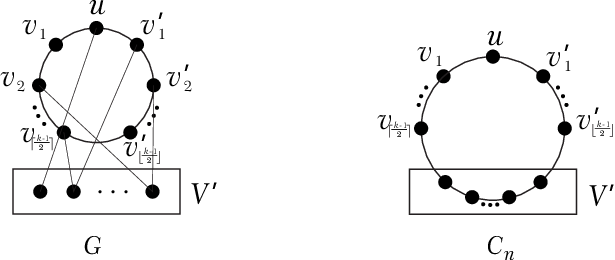}$$\\
 \caption{ Illustration of vertex labeling of graphs $G$ and $C_n$ in Case 2.}    \label{Fig.2}
\end{figure*}

\textbf{Case 2.} $k<n$.  In the following, we show $\Omega_{G}(u)<\frac{n^2-1}{6}$.

Since $C_k$ be the longest cycle in $G$ containing $u$. We first label the vertices of $G$ in the following way:
vertices lying on $C_k$ are labeled by $C_k:=uv_1v_2\cdots v_{\lceil\frac{k-1}{2}\rceil}v'_{\lfloor\frac{k-1}{2}\rfloor}\cdots v'_2v'_1u$ as shown in Fig. \ref{Fig.2}, and the remaining vertices are labeled in an arbitrary manner. In order to compare resistance distances between vertices in $G$ and $n$-cycle $C_n$, we use the same labeling of $V(G)$ to label vertices in $C_n$: first choose an arbitrary vertex of $C_n$ and label it as $u$,  then label the vertices at distance $i$ to $u$  in $C_n$ by $v_i$ for $1 \leq i \leq \lceil\frac{k-1}{2}\rceil$,  and label the vertices at distance $i$ to $u$ by $v'_i$ for $1 \leq i \leq \lfloor\frac{k-1}{2}\rfloor$,  as shown in Fig. \ref{Fig.2}.  Next we show that for any vertex $v\in V(G)=V(C_n)$ other than $u$, $\Omega_{G}(u,v)<\Omega_{C_n}(u,v)$.

For any vertex $v_i$ ($1 \leq i \leq \lceil\frac{k-1}{2}\rceil$), by Lemma \ref{lem2.1}, we have
\begin{equation}\label{eq3.2.3}
\Omega_{G}(u,v_i) \leq \Omega_{C_k}(u,v_i)=\frac{(k-i)i}{k} < \frac{(n-i)i}{n}= \Omega_{C_n}(u,v_i).
\end{equation}
In the same way, for vertex $v'_i$ ($1 \leq i \leq \lfloor\frac{k-1}{2}\rfloor$), we also have
\begin{equation}\label{eq3.2.4}
\Omega_{G}(u,v'_i) < \Omega_{C_n}(u,v'_i).
\end{equation}
Then Ineqs. (\ref{eq3.2.3}) and (\ref{eq3.2.4}) imply that
\begin{equation}\label{eq1}
\sum_{v \in V(C_k)} \Omega_{G}(u, v) < \sum_{v \in V(C_k)} \Omega_{C_n}(u, v).
\end{equation}
Now let $V'$ be the set of vertices of $G$ not lying on $C_k$, that is $V'=V(G)\setminus V(C_k)$.  Choose $w\in V'$ such that
$$\Omega_{G}(u,w) = \mbox{max} \Big\{ \Omega_{G}(u,v)\ | \ v \in V' \Big\}.   $$
Since $G$ is 2-connected, $u$ and $w$ must lie on a common cycle $C_m$ in $G$ and by the assumption of $C_k$ we know that $m\leq k$. If $m$ is even, it is easily seen that
$$\Omega_{G}(u, w) \leq \Omega_{C_m}(u, w) \leq \frac{m}{4}.$$
Otherwise, if $m$ is odd, then
$$\Omega_{G}(u, w) \leq   \Omega_{C_m}(u, w) \leq \frac{m}{4}-\frac{1}{4m}.$$
To sum up, no matter the parity of $m$, we always have
$$\Omega_{G}(u, w) \leq   \frac{m}{4} \leq \frac{k}{4}.$$
Thus for any vertex $v\in V'$, we have
\begin{equation}\label{eq2.12}
\Omega_{G}(u, w)\leq \frac{k}{4}.
\end{equation}

On the other hand, for any $v \in V'$ in $C_n$, let $P$ the path between $v$ and $v_{\lceil\frac{k-1}{2}\rceil}$ not passing $u$ in $C_n$, and $Q$ be the path between vertices $v$ and $v'_{\lfloor\frac{k-1}{2}\rfloor}$ not passing $u$ in $C_n$. Suppose that the lengths of $P$ and $Q$ are $x$ and $y$, respectively. Obviously, $x \geq 1$ and $y \geq 1$. By series and parallel connection rules, for the case where $k$ is odd, we have
\begin{align}\label{eq3.2.5}
\Omega_{C_n}(u,v)&=\frac{\left(\frac{k-1}{2}+x\right)\left(\frac{k-1}{2}+y\right)}{n}\notag \\
&= \frac{\frac{(k-1)^2}{4}+\frac{(k-1)(x+y)}{2}+xy}{k-1+x+y}            \notag \\
&= \frac{\frac{k}{4}(k-1+x+y)+[\frac{k}{4}(x+y-1)+xy-\frac{x+y}{2}+\frac{k}{4}]}{k-1+x+y}  \notag \\
&> \frac{k}{4} \quad\quad\quad \mbox{(as $\frac{k}{4}(x+y-1)+xy-\frac{x+y}{2}+\frac{1}{4}>0$)}.
\end{align}
Similarly, for the case where $k$ is even, we get
\begin{align}\label{eq3.2.14}
\Omega_{C_n}(u,v)&=\frac{\left(\frac{k}{2}+x\right)\left(\frac{k}{2}-1+y\right)}{n}\notag \\
&= \frac{\frac{k^2}{4}+\frac{k(x+y)}{2}+xy-x-\frac{k}{2}}{k-1+x+y}       \notag \\
&= \frac{\frac{k}{4}(k-1+x+y)+[\frac{k}{4}(x+y)+xy-x-\frac{k}{4}]}{k-1+x+y}  \notag \\
&> \frac{k}{4} \quad\quad\quad \mbox{(as $\frac{k}{4}(x+y)+xy-x-\frac{k}{4}>0$)}.
\end{align}
By Ineqs. (\ref{eq2.12})-(\ref{eq3.2.14}), we know that no matter the parity of $k$, for every $ v \in V'$, $\Omega_{G}(u, v)<\Omega_{C_n}(u,v)$ always holds. Thus it follows that
\begin{equation}\label{eq3.2.6}
\sum_{v \in V'} \Omega_{G}(u, v) < \sum_{v \in V'} \Omega_{C_n}(u, v).
\end{equation}
Combining Ineqs. (\ref{eq1}) and (\ref{eq3.2.6}), we get
\begin{equation*}
\Omega_{G}(u) < \Omega_{C_n}(u)= \frac{n^2-1}{6}.
\end{equation*}
The proof is complete.
\end{proof}

Combining Eq. (\ref{eq1.2}) and Theorem \ref{tm3.2}, we could confirm Conjecture \ref{conj1}.
\begin{thm} \label{thm4.2}
Let $G$ be a connected graph with constant resistance curvature on $n \geq 3$ vertices. Then we have
$$\mathcal{K}_G\geq \frac{6}{n^2-1},$$
with equality holding if and only if  $G=C_n.$
\end{thm}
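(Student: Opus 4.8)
The plan is to derive Theorem \ref{thm4.2} directly from the two structural results already established, by passing from resistive eccentricity indices to curvatures through the reciprocal relation in Eq. (\ref{eq1.2}). First I would observe that since $G$ has constant resistance curvature on $n \geq 3$ vertices, Theorem \ref{tm3.1} guarantees that $G$ is 2-connected; this is exactly the hypothesis needed to invoke the eccentricity bound. Moreover, having constant resistance curvature is equivalent to being resistance-regular, so the quantity $\Omega_G(u)$ takes a common value over all $u \in V(G)$, and by Eq. (\ref{eq1.2}) this common value equals $1/\mathcal{K}_G$.

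Next I would apply Theorem \ref{tm3.2} to the 2-connected graph $G$: for every vertex $u$ we have $\Omega_G(u) \leq \frac{n^2-1}{6}$. Fixing any vertex $u$ and substituting $\Omega_G(u) = 1/\mathcal{K}_G$ — equivalently, taking reciprocals, which reverses the inequality since both sides are positive for $n \geq 3$ — this yields
\begin{equation*}
\mathcal{K}_G = \frac{1}{\Omega_G(u)} \geq \frac{6}{n^2-1},
\end{equation*}
which is the desired lower bound.

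For the equality case, I would argue as follows. If $\mathcal{K}_G = \frac{6}{n^2-1}$, then $\Omega_G(u) = \frac{n^2-1}{6}$, and the equality characterization in Theorem \ref{tm3.2} forces $G = C_n$. Conversely, since $C_n$ is vertex-transitive it is resistance-regular, hence has constant resistance curvature, and Eq. (\ref{eq1.2}) together with $\Omega_{C_n}(u) = \frac{n^2-1}{6}$ gives $\mathcal{K}_{C_n} = \frac{6}{n^2-1}$; thus $C_n$ attains the bound. This closes both directions of the equality statement.

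The genuine difficulty has already been absorbed into the preceding two theorems: Theorem \ref{tm3.1} rules out cut vertices, and Theorem \ref{tm3.2} — the real heart of the argument — pins down the maximum resistive eccentricity among 2-connected graphs via Rayleigh's monotonicity law and the chord-deletion estimate of Theorem \ref{thm2.4}. Given these, the only conceptual step remaining is recognizing that constant curvature couples every vertex to the same reciprocal resistive eccentricity, so that the uniform upper bound on $\Omega_G(u)$ translates cleanly into the uniform lower bound on $\mathcal{K}_G$; no further estimation is required, and there is no real obstacle left at this stage.
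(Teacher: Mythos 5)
Your proposal is correct and follows exactly the paper's route: the paper derives Theorem \ref{thm4.2} by combining Theorem \ref{tm3.1} (to obtain 2-connectedness), Theorem \ref{tm3.2} (the bound $\Omega_G(u) \leq \frac{n^2-1}{6}$ with equality characterization), and Eq.\ (\ref{eq1.2}) to pass to the reciprocal, just as you do. Your explicit verification of the converse equality direction via the vertex-transitivity of $C_n$ is a minor elaboration the paper leaves implicit, but the argument is the same.
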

In \cite{kde}, Devriendt, Ottolini and Steinerberger proved that the complete graph $K_n$ (i.e., $\mathcal{K}_{K_n}=n/2n-2$) has the maximum curvature among graphs with constant resistance curvature. Together with Theorem \ref{thm4.2}, we have
\begin{cor}
Let $G$ be a connected graph with constant resistance curvature on $n \geq 3$ vertices. Then we have
$$\frac{6}{n^2-1} \leq \mathcal{K}_G  \leq \frac{n}{2n-2},$$
with the left equality holding if and only if  $G=C_n$, and the right equality holding if and only if  $G=K_n$.
\end{cor}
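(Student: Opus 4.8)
The plan is to read this Corollary as the juxtaposition of two one-sided estimates, since its two extremal graphs $C_n$ and $K_n$ sit at opposite ends of the range of $\mathcal{K}_G$. The lower bound together with its equality case is precisely Theorem~\ref{thm4.2}, so for that half I would only reassemble the logical chain: by Theorem~\ref{tm3.1} every constant-curvature graph on $n\ge 3$ vertices is $2$-connected, which is exactly the hypothesis that lets Theorem~\ref{tm3.2} apply and give $\Omega_G(u)\le \frac{n^2-1}{6}$ for each vertex $u$, with equality only when $G=C_n$. Substituting this into the identity $\mathcal{K}_G=1/\Omega_G(u)$ from Eq.~(\ref{eq1.2}) reverses the inequality and yields $\mathcal{K}_G\ge \frac{6}{n^2-1}$, with equality iff $G=C_n$.

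For the upper bound I would argue directly from monotonicity rather than merely quoting the external claim of \cite{kde}, which has the advantage of also producing the uniqueness. Every graph $G$ on $n$ vertices is a spanning subgraph of $K_n$, so Lemma~\ref{lem2.1} gives $\Omega_G(u,v)\ge \Omega_{K_n}(u,v)=\frac{2}{n}$ for every pair $\{u,v\}$. Summing over all $\binom{n}{2}$ pairs,
\[
Kf(G)=\sum_{\{u,v\}\subseteq V(G)}\Omega_G(u,v)\ \ge\ \binom{n}{2}\cdot\frac{2}{n}\ =\ n-1,
\]
and feeding this into the second form $\mathcal{K}_G=\frac{n}{2Kf(G)}$ of Eq.~(\ref{eq1.2}) gives $\mathcal{K}_G\le \frac{n}{2(n-1)}=\frac{n}{2n-2}$. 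For the equality analysis, if $G\ne K_n$ then some edge $ij$ is absent; writing $G\subseteq G+ij\subseteq K_n$ and combining Lemma~\ref{lem2.1} with the strict form of Rayleigh's law (adding $ij$ strictly lowers the resistance between $i$ and $j$) yields $\Omega_G(i,j)>\frac{2}{n}$, hence $Kf(G)>n-1$ and $\mathcal{K}_G<\frac{n}{2n-2}$; conversely $\mathcal{K}_{K_n}=\frac{n}{2(n-1)}$ follows from the direct computation $\Omega_{K_n}(u)=(n-1)\cdot\frac{2}{n}$.

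Stitching the two halves together gives the full chain $\frac{6}{n^2-1}\le \mathcal{K}_G\le \frac{n}{2n-2}$ with the claimed left- and right-equality cases. I do not anticipate a real obstacle at this stage: the entire difficulty of the statement has already been absorbed into Theorem~\ref{tm3.2}, whose longest-cycle estimate secures the hard lower side, whereas the upper side is a short consequence of Rayleigh monotonicity on $K_n$. The only point deserving care is keeping both equality characterizations \emph{strict}, which is why I would invoke the strict version of monotonicity (a genuinely added edge strictly decreases resistance) rather than the weak inequality recorded in Lemma~\ref{lem2.1}.
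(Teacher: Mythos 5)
Your proposal is correct, and its lower-bound half coincides exactly with the paper's route: Theorem \ref{tm3.1} supplies $2$-connectedness, Theorem \ref{tm3.2} gives $\Omega_G(u)\le \frac{n^2-1}{6}$ with equality only for $C_n$, and Eq.~(\ref{eq1.2}) converts this into $\mathcal{K}_G\ge \frac{6}{n^2-1}$; the paper packages this as Theorem \ref{thm4.2} and you merely re-derive it, which is fine. Where you genuinely diverge is the upper half: the paper simply cites the result of Devriendt--Ottolini--Steinerberger \cite{kde} that $K_n$ maximizes the curvature among constant-curvature graphs, whereas you prove it from scratch via $\Omega_G(u,v)\ge \Omega_{K_n}(u,v)=\frac{2}{n}$ (Lemma \ref{lem2.1} applied to $G$ as a spanning subgraph of $K_n$), summation to $Kf(G)\ge n-1$, and the identity $\mathcal{K}_G=\frac{n}{2Kf(G)}$. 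This buys self-containedness and, importantly, an internal proof of the uniqueness of $K_n$, which the paper delegates entirely to \cite{kde}. One small repair: the paper states Rayleigh monotonicity only in weak form, so your appeal to a ``strict form'' is not literally in the toolbox; but the strictness you need follows at once from Theorem \ref{thm2.4} with $p=i$, $q=j$ and $G'=G$, which gives $\Omega_{G+ij}(i,j)=\Omega_G(i,j)-\frac{\Omega_G(i,j)^2}{1+\Omega_G(i,j)}<\Omega_G(i,j)$ since $\Omega_G(i,j)>0$ --- exactly the device the paper itself uses in Case~1 of Theorem \ref{tm3.2}. With that anchoring, your equality analysis ($G\ne K_n$ forces $\Omega_G(i,j)>\frac{2}{n}$ for a missing edge $ij$, hence $Kf(G)>n-1$ and $\mathcal{K}_G<\frac{n}{2n-2}$, while $\mathcal{K}_{K_n}=\frac{n}{2(n-1)}$ by direct computation) is complete and arguably an improvement on the paper's citation-based treatment.
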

According to Theorem \ref{tm3.2}, we also give the solution to Problem \ref{conj2}.
\begin{thm}
Let $G$ be a 2-connected graph with $n$ vertices. Then we have
$$Kf(G) \leq \frac{n^3-n}{12}, $$
with equality holding if and only if  $G=C_n.$
\end{thm}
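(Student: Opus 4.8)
The plan is to derive this bound directly from Theorem \ref{tm3.2} together with the averaging identity (\ref{eq1.1}), since essentially all of the analytic work has already been concentrated in the per-vertex estimate. Note first that, by the definition of a 2-connected graph recalled above, such a $G$ has $n \geq 3$ vertices, so Theorem \ref{tm3.2} is applicable at every vertex of $G$.

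First I would recall from (\ref{eq1.1}) that the Kirchhoff index is half the sum of the resistive eccentricity indices, namely $Kf(G) = \frac{1}{2}\sum_{u \in V(G)} \Omega_{G}(u)$. Theorem \ref{tm3.2} bounds each summand by $\frac{n^2-1}{6}$, so summing over all $n$ vertices and dividing by $2$ gives
$$Kf(G) = \frac{1}{2}\sum_{u \in V(G)} \Omega_{G}(u) \leq \frac{1}{2}\cdot n \cdot \frac{n^2-1}{6} = \frac{n^3-n}{12},$$
which is the desired bound.

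For the equality characterization I would argue that equality in the displayed chain forces $\Omega_{G}(u) = \frac{n^2-1}{6}$ for every vertex $u$, since if even one summand were strictly smaller the total would fall strictly below $\frac{n(n^2-1)}{6}$. But the equality part of Theorem \ref{tm3.2} already guarantees that $\Omega_{G}(u) = \frac{n^2-1}{6}$ holding at a single vertex forces $G = C_n$; conversely, for $G = C_n$ one has $\Omega_{C_n}(u) = \frac{n^2-1}{6}$ for each $u$ (as used in Case 1 of the proof of Theorem \ref{tm3.2}), whence $Kf(C_n) = \frac{n^3-n}{12}$ by the same computation. This settles both directions.

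The hard part is not in this statement at all: the entire difficulty has been absorbed into Theorem \ref{tm3.2}, which simultaneously supplies the pointwise bound and its rigidity. The only step requiring a sentence of care is the equality analysis, where one must observe that the sharpness of Theorem \ref{tm3.2} at a single vertex already pins down the graph, so no separate uniqueness argument for the summed inequality is needed.
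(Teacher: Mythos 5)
Your proposal is correct and follows exactly the route the paper intends: the paper derives this theorem directly from Theorem~\ref{tm3.2} via the identity $Kf(G)=\frac{1}{2}\sum_{u\in V(G)}\Omega_G(u)$ from Eq.~(\ref{eq1.1}), just as you do. Your extra observations --- that 2-connectedness already guarantees $n\geq 3$, and that the rigidity in Theorem~\ref{tm3.2} at even a single vertex pins down $G=C_n$ --- are accurate and correctly fill in the equality analysis the paper leaves implicit.
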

It is well-known that the complete graph $K_n$ (i.e., $Kf(K_n)=n-1$) is the unique graph with the minimum Kirchhoff index  among all connected graphs with $n$ vertices. As a corollary, we characterize the extremal resistance-regular graphs with respect to the Kirchhoff index.
\begin{cor}
Let $G$ be a resistance-regular graph with $n$ vertices. Then we have
$$  n-1  \leq Kf(G) \leq \frac{n^3-n}{12}, $$
with the left equality holding if and only if  $G=K_n$, and the right equality holding if and only if  $G=C_n$.
\end{cor}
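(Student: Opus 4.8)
The plan is to derive both inequalities almost directly from results already in hand, since the corollary concerns precisely the resistance-regular graphs and the paper has noted that resistance-regularity is equivalent to having constant resistance curvature. I would treat the two bounds separately and, in each case, take special care that the extremal graph actually belongs to the resistance-regular class so that the equality characterizations are genuinely attained.

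For the lower bound I would invoke the classical fact, recalled just above the statement, that $K_n$ is the unique minimizer of the Kirchhoff index over all connected graphs on $n$ vertices, with $Kf(K_n)=n-1$. Every resistance-regular graph is in particular connected, so $Kf(G)\geq n-1$ follows at once. Since $K_n$ is vertex-transitive it is resistance-regular, hence the bound is attained within the class, and the uniqueness in the classical result forces $G=K_n$ in the equality case.

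For the upper bound I would first apply Theorem \ref{tm3.1}: for $n\geq 3$ every resistance-regular graph is $2$-connected. Theorem \ref{tm3.2} then gives $\Omega_G(u)\leq \frac{n^2-1}{6}$ for every vertex $u$, with equality if and only if $G=C_n$. Because $G$ is resistance-regular, all resistive eccentricity indices coincide, so by Eq. (\ref{eq1.1}) one has $Kf(G)=\frac{n}{2}\Omega_G(u)\leq \frac{n}{2}\cdot\frac{n^2-1}{6}=\frac{n^3-n}{12}$. Equality holds exactly when $\Omega_G(u)=\frac{n^2-1}{6}$, i.e. when $G=C_n$; and as $C_n$ is vertex-transitive it too is resistance-regular, so this extremal case lies in the class.

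The argument is a synthesis rather than a fresh computation, so the main point requiring attention is not difficulty but consistency at the boundary. I would verify that $K_n$ and $C_n$ are resistance-regular (immediate from vertex-transitivity) so that both equality cases are genuinely realized, and I would check the small cases: for $n=1,2$ the only resistance-regular graphs are $K_1,K_2$, where the lower bound is tight, and for $n=3$ one has $K_3=C_3$ with $n-1=\frac{n^3-n}{12}=2$, so the two equality characterizations agree and the chain $n-1\leq Kf(G)\leq\frac{n^3-n}{12}$ is consistent throughout.
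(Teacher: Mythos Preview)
Your proposal is correct and matches the paper's own treatment: the corollary is stated there without proof, as an immediate consequence of the well-known minimality of $Kf(K_n)$ together with the preceding theorem that $Kf(G)\leq\frac{n^3-n}{12}$ for $2$-connected $G$ (which in turn rests on Theorems~\ref{tm3.1} and~\ref{tm3.2}); your route via $\Omega_G(u)$ and Eq.~(\ref{eq1.1}) is the same argument unpacked one step further. One small slip in your boundary check: for $n=2$ the upper bound reads $\frac{n^3-n}{12}=\tfrac12<1=Kf(K_2)$, so the inequality as stated fails there---the corollary should be read with $n\geq 3$ (implicit in ``$G=C_n$''), and your main argument already assumes this via Theorem~\ref{tm3.1}.
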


\section{Declaration of competing interest}
The authors declare that they have no known competing financial interests or personal relationships that could have appeared to influence the work reported in this paper.
\section{Data availability}
No data was used for the research described in the article.
\section{Acknowledgments}
The support of the National Natural Science Foundation of China (through grant no. 12071194) and Taishan Scholars Special Project of Shandong Province is greatly acknowledged.

\end{CJK}

\begin{thebibliography}{9}
{\small

\bibitem{dba}D. Bakry, M. \'{E}mery, Diffusions hypercontractives, in S\'{e}minaire de Probabilit\'{e}s, XIX, 1983/84, 177--206, Lecture Notes in Math. 1123, Springer, Berlin, (1985).

\bibitem{rbb}R.B. Bapat, Resistance matrix of a weighted graph, MATCH Commun. Math. Comput. Chem. 50(02) (2004) 73--82.

\bibitem{dpb}D.P. Bourne, D. Cushing, S. Liu, et al, Ollivier-Ricci idleness functions of graphs, SIAM J. Discrete Math. 32(2) (2018) 1408--1424.

\bibitem{frk}F.R.K. Chung, S-T. Yau, Logarithmic harnack inequalities, Math. Res. Lett. 3(6) (1996) 793--812.

\bibitem{kde1}K. Devriendt, R. Lambiotte, Discrete curvature on graphs from the effective resistance, J. Phys.: Complexity 3 (2022) 025008.

\bibitem{kde}K. Devriendt, A. Ottolini, S. Steinerberger, Graph curvature via resistance distance, Discrete Appl. Math. 348 (2024) 68--78.

\bibitem{pgd}P.G. Doyle, J.L. Snell, Random Walks and Electric Networks, The Mathematical Association of America, Washington, DC, 1984.

\bibitem{jg1} J. Ge, Y. Liao, B. Zhang, Resistance distances and the Moon-type formula of a vertex-weighted complete split graph, Discrete Appl. Math. 359 (2024) 10--15.

\bibitem{pho}P. Horn, A. Purcilly, A. Stevens, Graph curvature and local discrepancy, J. Graph Theory 108(2) (2025) 337--360.

\bibitem{jhu1}J. Huang, G. Huang, J. Li, W. He, On the minimum Kirchhoff index of graphs with a given number of cut vertices, Discrete Appl. Math. 365 (2025) 27--38.

\bibitem{qhu}Q. Huang, W. He, C. Zhang, Graphs with positive Ricci curvature, Graphs Combin. 41(1) (2025) 14.

\bibitem{djk1} D.J. Klein, M. Randi\'{c}, Resistance distance, J. Math. Chem. 12 (1993) 81--95.

\bibitem{wko}W. Kook, K.J. Lee, Simplicial Kirchhoff index of random complexes, Adv. Appl. Math. 159 (2024) 102733.

\bibitem{hli}H. Lin, Z. You, Graphs with nonnegative Bakry-\'{E}mery curvature without quadrilateral, Proc. Amer. Math. Soc. 153(05) (2025) 1839--1848.

\bibitem{yli1}Y. Lin, L. Lu, S-T. Yau, Ricci curvature of graphs, Tohoku Math. J. 63(4) (2011) 605--627.

\bibitem{yli}Y. Lin, S-T. Yau, Ricci curvature and eigenvalue estimate on locally finite graphs, Math. Res. Lett. 17(2-3) (2010) 343--356.

\bibitem{ani}A. Nikseresht, On the minimum Kirchhoff index of graphs with a fixed number of cut vertices, Discrete Appl. Math. 207 (2016) 99--105.

\bibitem{ani1}A. Nikseresht, Z. Sepasdar, On the Kirchhoff and the Wiener indices of graphs and block decomposition, Electron. J. Combin. 21(1) (2014) P1.25.

\bibitem{yol}Y. Ollivier, Ricci curvature of Markov chains on metric spaces, J. Funct. Anal. 256(3) (2009) 810--864.

\bibitem{sss}S.S. Saha, S.K. Panda, On the Kirchhoff index of hypergraphs, Czech. Math. J. in press, https://doi.org/10.21136/CMJ.2025.0308-24

\bibitem{sst}S. Steinerberger, Curvature on graphs via equilibrium measures, J. Graph Theory 103(3) (2023) 415--436.

\bibitem{wsu}W. Sun, Y. Yang, S. Xu, Resistance distance and Kirchhoff index of unbalanced blowups of graphs, Discrete Math. 348(3) (2025) 114327.

\bibitem{kxu}K. Xu, H. Liu, Y. Yang, K.C. Das, The minimal Kirchhoff index of graphs with a given number of cut vertices, Filomat 30(13) (2016) 3451--3463.

\bibitem{sxu}S. Xu, K. Xu, Resistance distances in generalized join graphs, Discrete Appl. Math. 362 (2025) 18--33.

\bibitem{sxu1}S. Xu, H. Zhou, X. Pan, A method for constructing graphs with the same resistance spectrum, Discrete Math. 348(2) (2025) 114284.

\bibitem{yya}Y. Yang, D.J. Klein, A recursion formula for resistance distances and its applications. Discrete Appl. Math. 161(16-17) (2013) 2702--2715.

\bibitem{jzh}J. Zhou, Z. Wang, C. Bu, On the resistance matrix of a graph, Electron. J. Combin. (2016) P1.41.





}
\end{thebibliography}
\end{document}